\newtheorem{proposition}{Proposition}[section]
\tikzstyle{bigblock} = [draw, fill=blue!20, rectangle, 
\tikzstyle{medblock} = [draw, fill=blue!20, rectangle, 
\tikzstyle{mux} = [draw, fill=black!20, rectangle, 
\tikzstyle{smallblock} = [draw, fill=blue!20, rectangle, 
\tikzstyle{data_block} = [draw, fill=green!20, rectangle, 
\tikzstyle{ops_block} = [draw, fill=blue!20, rectangle, 
\tikzstyle{est_block} = [draw, fill=red!20, rectangle, 
\tikzstyle{sum} = [draw, fill=blue!20, circle, node distance=1cm,minimum height=0.5cm]
\tikzstyle{signal} = [coordinate]
\tikzstyle{pinstyle} = [pin edge={to-,thin,black}]
\tikzstyle{block} = [draw, fill=blue!20, rectangle, 
\tikzstyle{blockS} = [draw, fill=blue!20, rectangle, 
\tikzstyle{input} = [coordinate]
\tikzstyle{output} = [coordinate]
\tikzstyle{gain} = [draw, fill=blue!20, regular polygon, regular polygon sides=3, shape border rotate=30]
\tikzstyle{gain_vert} = [draw, fill=blue!20, regular polygon, regular polygon sides=3, shape border rotate=0]
\newcommand{\bc}{\begin{center}}
\newcommand{\ec}{\end{center}}
\newcommand{\benum}{\begin{enumerate}}
\newcommand{\eenum}{\end{enumerate}}
\newcommand{\matl}{\left[ \begin{array}}
\newcommand{\matr}{\end{array} \right]}
\renewcommand{\matl}{\begin{bmatrix}}
\renewcommand{\matr}{\end{bmatrix}}
\newcommand{\matls}{\left[ \begin{smallmatrix}}
\newcommand{\matrs}{\end{smallmatrix} \right]}
\newcommand{\isdef}{\stackrel{\triangle}{=}}
\newcommand{\inv}{^{-1}}
\newcommand{\dpder}[2]{\displaystyle\frac{\partial {#1}}{\partial {#2}}}
\newcommand{\tr}{{\rm tr}\,}
\newcommand{\rmN}{{\rm N}}
\newcommand{\rmT}{{\rm T}}
\newcommand{\rma}{{\rm a}}
\newcommand{\BBR}{{\mathbb R}}
\newcommand{\SN}{{\mathcal N}}
\newcommand{\SO}{{\mathcal O}}
\newcommand{\SP}{{\mathcal P}}
\newlist{todolist}{itemize}{2}
\setlist[todolist]{label=$\square$}
\title{Thrust Regulation in a Solid Fuel Ramjet using \\ Dynamic Mode Adaptive Control}
\author{
Parham Oveissi\footnote{Graduate Research Assistant, Department of Mechanical Engineering, University of Maryland, Baltimore County, 1000 Hilltop Circle, Baltimore, MD 21250.}, 
Gohar T. Khokhar\footnote{Postdoctoral Research Associate, Department of Aerospace \& Mechanical Engineering, University of Arizona, 1130 N. Mountain Avenue, Tucson, AZ 85721. AIAA Member.},
Kyle Hanquist\footnote{Assistant Professor, Department of Aerospace \& Mechanical Engineering, University of Arizona, 1130 N. Mountain Avenue, Tucson, AZ 85721. AIAA Senior Member.}, 
Ankit Goel\footnote{Assistant Professor, Department of Mechanical Engineering, University of Maryland, Baltimore County, 1000 Hilltop Circle, Baltimore, MD 21250.}
}
\begin{document}
\maketitle

\begin{abstract}
    
    This paper presents the application of a novel data-driven adaptive control technique, called dynamic mode adaptive control (DMAC), for regulating thrust in a solid fuel ramjet (SFRJ). A high-fidelity computational model incorporating compressible flow theory and equilibrium chemistry is used to simulate the combustion dynamics. An adaptive tracking controller is designed using the DMAC framework, which leverages dynamic mode decomposition to approximate the local system behavior, followed by a tracking controller synthesized around the identified model. Simulation results demonstrate that DMAC provides an effective and reliable approach for thrust regulation in SFRJs. In addition, a systematic hyperparameter sensitivity study is conducted by varying the tuning parameters over several orders of magnitude. The resulting responses show that the closed-loop performance and tracking error remain stable across wide parameter variations, indicating that DMAC exhibits strong robustness to hyperparameter tuning.

\end{abstract}

\section{Introduction}

Ramjet engines are well-suited for long-range, high-speed missions, owing to their ability to deliver sustained thrust over extended durations. 
Their operational simplicity, due to the absence of rotating turbomachinery, makes them easier to operate and maintain compared to air-breathing propulsion systems.
Based on the type of fuel used, ramjets can be classified as either liquid-fuel (LFRJ) or solid-fuel (SFRJ) variants. 
Among these, SFRJs offer greater mechanical simplicity at comparable scales, as they do not require turbopumps, fuel bladders, injectors, or the associated plumbing.
Furthermore, the higher volumetric energy density of solid fuels can enable SFRJs to achieve longer ranges than their liquid-fueled counterparts.
An additional advantage of SFRJs is their combustion behavior: the flame front typically extends along the entire length of the fuel grain, which helps suppress combustion instabilities that are more common in LFRJs.


%
Figure \ref{fig:SFRJ_schematic} illustrates the typical geometry of a Solid Fuel Ramjet (SFRJ). 
The solid fuel grain lines the combustor wall, where the high-speed, high-temperature airflow vaporizes and ignites the fuel.
The resulting combustion adds energy to the flow, producing thrust.
Stable operation requires carefully maintained flow conditions within the combustor. 
If the airflow rate is too low, the heat release may be insufficient to generate the required thrust.
On the other hand, an excessively high flow rate can lead to inlet unstart due to excessive heat addition or exceed the blowoff limits of the combustor \cite{arjun2022unstart,varshney2019unstart,yuceil2009active}.
Both situations increase the risk of flame extinction and thrust loss.
As such, the thermodynamic state within the combustor, comprised of pressure, temperature, and mass flow rate, must remain within a narrow, controlled operating range.
Thus, ensuring reliable operation of an SFRJ requires a control system that maintains the SFRJ's state within acceptable limits and remains robust against parametric variations and external disturbances across the broadest possible operating envelope.
However, due to the complexity of the multi-physics processes involved, including solid combustion, mixing, and high-speed flow, developing an analytical model that 1) predicts a stable operational range, 2) is computationally efficient, and 3) can be used for control system design, is highly challenging.

\begin{figure}[htbp]
    \centering
    \includegraphics[clip=true,trim=02 02 03 02,width = 0.7\textwidth]{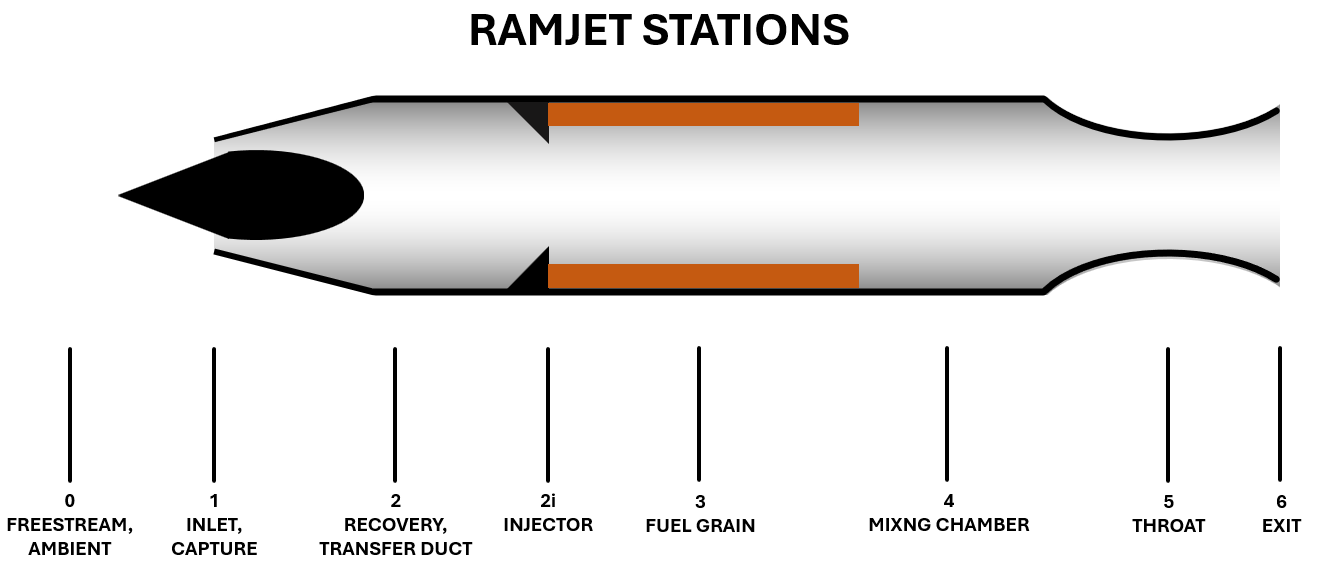}
    \caption{A typical SFRJ cross section.}
    \label{fig:SFRJ_schematic}
\end{figure}

In this paper, we consider the problem of developing an adaptive control system that does not require a system model, but instead uses limited measurements to update the control law and generate the required control signal to regulate the thrust generated by the SFRJ.
This work complements our previous work described in \cite{oveissi2023learning, oveissi2024adaptive,oveissi2025adaptive}, which investigated the application of the retrospective cost adaptive control to regulate the thrust generated by a quasi-static model of SFRJ and a computational model that simulates the multiphysics dynamics in an SFRJ. 

The control design approach used in this work is based on the dynamic mode adaptive control (DMAC) algorithm, which is described in detail in \cite{oveissi2025modelfreedynamicmodeadaptive}.
Although our previous work based on the retrospective cost adaptive control (RCAC) framework has been successfully applied to the thrust control problem in ramjets \cite{oveissi2025adaptive,oveissi2024adaptive,oveissi2023learning, deboskey2025situ,goel2018retrospective, oveissi2025learning}, RCAC requires the choice of a \textit{target filter} that captures the essential modeling information required to update the control law. 
However, the fixed filter choice limits the operational envelope of the system and significantly complicates the design process in a multi-input, multi-output system.
In contrast, DMAC does not require any modeling information and instead uses the measured data to identify a low-order dynamic approximation of the system along with an adaptive linear controller and thus potentially has a larger operational envelope.

To simulate the flow inside an SFRJ, we use a high-fidelity computational model described in detail in \cite{khokhar2025investigation}.
Specifically, a truncated section of an SFRJ geometry is used to simulate the internal flow in order to keep computational cost low. 
To model the continuum hypersonic flows, we use the SU2-NEMO (NonEquilibrium MOdels) code that solves the Navier-Stokes equations for multi-species gases in thermochemical nonequilibrium \cite{maier2021su2,economon:aj:2016,maier:a:2021}.

The paper is organized as follows. 
Section \ref{sec:cfd_model} describes the computational model in detail and the simplified SFRJ geometry used in this work. 
Section \ref{sec:DMAC} briefly reviews the dynamic mode adaptive control framework to regulate the thrust of the SFRJ model.
Section \ref{sec:prelim_results} presents simulation results to demonstrate the application of the DMAC technique to regulate the SFRJ thrust. 
Finally, the paper concludes in Section \ref{sec:conclusions}.

\section{Computational Model of SFRJ}
\label{sec:cfd_model}

This section briefly describes the computational model of the SFRJ considered in this work. 
The governing equations of the flow are described in detail in \cite{oveissi2025adaptive, khokhar2025investigation}.
The CFD software used for this work is SU2, a computational analysis and design package that has been developed to solve multiphysics analysis and optimization tasks using unstructured mesh topologies \cite{economon2016}.
SU2 employs a median-dual finite-volume approach to solve the discretized governing equations. 
Further details of the governing equations and numerical schemes of SU2 can be found in Ref.~\cite{economon2016}.


The computational domain consists of a truncated SFRJ geometry that includes the inlet channel and the combustor, as shown in Figure \ref{fig:Truncated_vs_Full}.
The truncated geometry is considered to reduce the computational cost of the simulation. 
The inlet channel is 40 mm in diameter and has a length of 0.2 m. The combustor is 70 mm in diameter and has a length of 0.978 m. The total length for the simplified geometry is 1.178 m.
Since the geometry is axisymmetric about the centerline, only half of the two-dimensional cross-section at the center of the geometry is simulated. 
Figure \ref{fig:MachContourPlot} shows the baseline Mach number contours in the computational domain with a supersonic inlet velocity of  $695$ $\rm m/s$, a static inlet pressure of $100,000$ $\rm Pa$, and a static inlet temperature of $300$ $K$ and with no heat addition.


\begin{figure}[htbp]
    \centering
    \includegraphics[width=0.9\columnwidth]{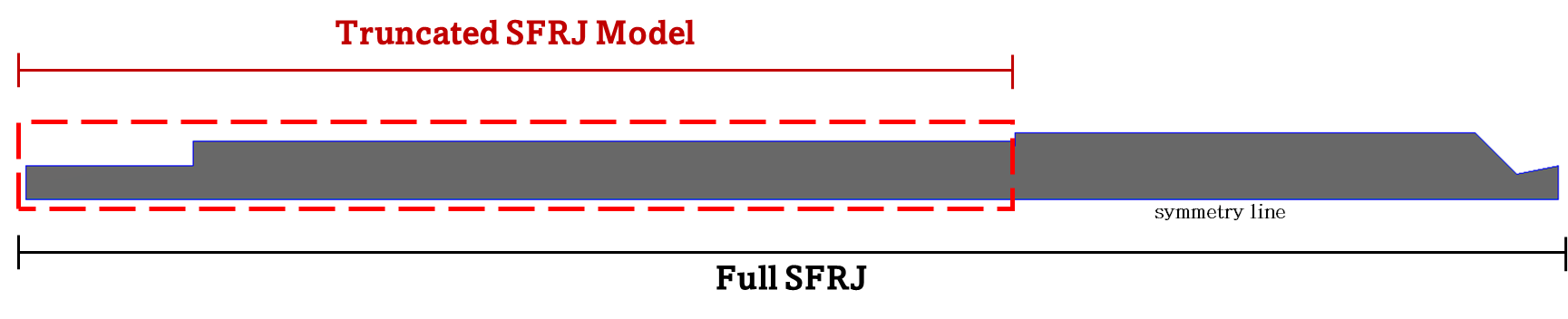}
    \caption{Truncated and the full SFRJ geometry. Only the truncated section is considered in this work.}
    \label{fig:Truncated_vs_Full}
\end{figure}


\begin{figure}[htbp]
    \centering
    \includegraphics[width=0.9\columnwidth]{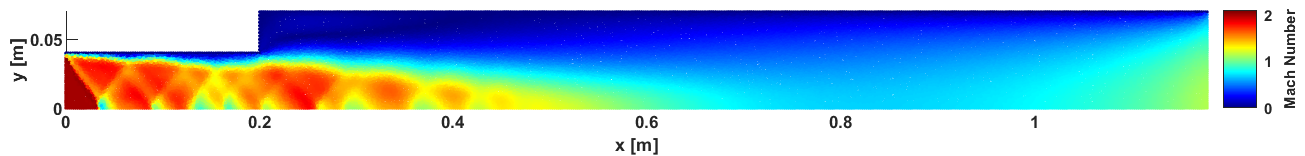}
    \caption{Mach number contour for truncated SFRJ.}
    \label{fig:MachContourPlot}
\end{figure}


\section{Dynamic Mode Adaptive Control}
\label{sec:DMAC}
This section presents the dynamic mode adaptive control (DMAC) algorithm. 
As shown in Figure \ref{fig:DMAC_architecture}, consider a dynamic system in a basic servo loop architecture whose input is $u_k \in \BBR^{l_u}$ and the output is $y_k \in \BBR^{l_y}.$
The objective of the DMAC controller is to generate a discrete-time input signal $u_k$ such that the sampled output $y_k$ tracks the reference signal $r_k.$
The DMAC controller consists of a dynamic mode approximation to approximate the local system behavior and a tracking controller designed based on the identified model. 

\begin{figure}[htbp]
    \centering
    \resizebox{0.65\columnwidth}{!}
    {
    \begin{tikzpicture}[auto, node distance=2cm,>=latex',text centered, line width = 1.5]

        \draw[draw=black, fill=yellow!10] (-2,-2.25)              
             rectangle ++(3.75,3.5) node [xshift=-5.0em, yshift=-1em] {\textbf{DMAC}} ;

             
        \node [smallblock, blue, fill = blue!20, minimum width=6em, minimum height=3em] (Controller) {Control};

        \node [smallblock, , fill = red!20, right = 5 em of Controller, minimum width = 6em, minimum height=3em] (plant) {System};

        

        
        \node [smallblock, blue, fill = blue!20, below = 2 em of Controller, minimum width=6em] (DMA) {DMA};

        \draw[<-] (Controller.160) -- +(-2,0) node[xshift = 1em, yshift = 0.75em]{$r_k$};
        \draw[->] (plant) -- +(3,0) node[xshift = -1em, yshift = 0.75em]{$y_k$};        
        \draw[->] (Controller) node[xshift = 6em, yshift = 0.75em]{$u_k$} -- (plant);

        \draw[->] (plant) -| +(2,-2.5) |- (-2.5,-2.5) |-(Controller.180);
        \draw[->,blue] (plant.-22) node[xshift = 1.25em, yshift = -0.75em]{$\xi_k$} -| +(0.65,-1.25) |-(DMA.-10);
        
        \draw[blue,->] (Controller.0) -| +(0.4,-1) node[xshift = -0.75em, yshift = 0.1em]{$u_k$} |- (DMA.10);
        \draw[blue,->] (DMA.180) node[xshift = 0.25em, yshift = 2em]{$A_k, B_k$} -| +(-0.5,1)  |- (Controller.200);

    \end{tikzpicture}
    }
        \caption{Dynamic Mode Adaptive Control (DMAC) architecture for model-free, data-driven, and learning-based control of dynamic systems.         
        }
        \label{fig:DMAC_architecture}
    \end{figure}
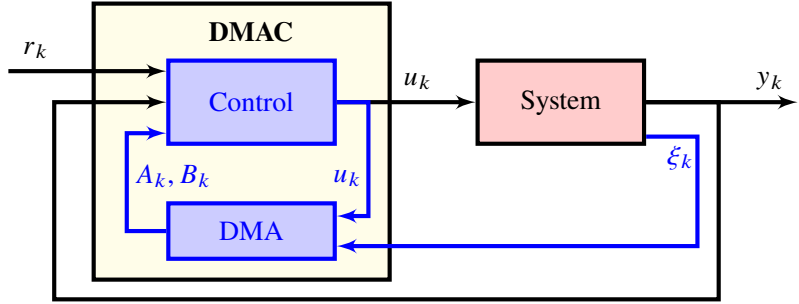

\subsection{Dynamic Mode Approximation}
\label{sec:DynApprox}
Let $\xi_k \in \BBR^{l_\xi}$ denote the measured portion of the state of the system. 
Note that $\xi_k$ may or may not be the entire state of the system.
To compute the control signal $u_k$, we first approximate linear maps $A \in \BBR^{l_\xi \times l_\xi}$ and $B \in \BBR^{l_\xi \times l_u}$ such that 
\begin{align}
    \xi_{k+1} = A \xi_{k} + B u_{k},
    \label{eq:linear_approximation}
\end{align}
which can be reformulated as
\begin{align}
    \xi_{k+1} = \Theta \phi_k.
    \label{eq:linear_approximation_AxForm_1}
\end{align}
where 
\begin{align}
    \Theta &\isdef \matl A  & B \matr \in \BBR^{l_\xi \times (l_\xi + l_u)}, 
    \quad
    \phi_k \isdef \matl \xi_{k} \\ u_{k} \matr \in \BBR^{l_\xi+l_u}. 
\end{align}
A matrix $\Theta$ such that \eqref{eq:linear_approximation_AxForm_1} is satisfied may not exist. 
However, an approximation of such a matrix can be obtained by minimizing 
\begin{align}
    J_k (\Theta)
        \isdef 
            \sum_{i=0}^{k} &\lambda^{k-i} \| \xi_{k} - \Theta \phi_{k-1} \|^2_2 
            +
            \lambda^k \tr (\Theta^\rmT R_\Theta \Theta) ,
    \label{eq:J_k_def}
\end{align}
where
$R_\Theta \in \BBR^{(l_\xi+l_u) \times (l_\xi+l_u)}$ is a positive definite regularization matrix that ensures the existence of the minimizer of \eqref{eq:J_k_def}
and 
$\lambda \in (0,1]$ is a forgetting factor. 
In nonlinear or time-varying systems, $\Theta$ approximated by minimizing \eqref{eq:J_k_def} in the case where the state varies significantly in the state space may not be able to capture the local linear behavior at the current state.
Thus, incorporating a geometric forgetting factor to prioritize recent data over older data improves the linear approximation and prevents the algorithm from becoming sluggish.

\begin{proposition}
    Consider the cost function \eqref{eq:J_k_def}.
    For all $k\geq 0,$ define the minimizer of \eqref{eq:J_k_def} as
    \begin{align}
        \Theta_k 
            \isdef 
                \min_{\Theta \in \BBR^{l_x \times (l_x + l_u)}} J_k(\Theta).
    \end{align}
    Then, the minimizer $\Theta_k$ satisfies
    \begin{align}
        \Theta_k
            &=
                \Theta_{k-1} 
                +
                \left(
                    \xi_{k} - \Theta_{k-1} \phi_{k-1}
                \right)
                \phi_{k-1}^\rmT \SP_k  
            , \\
        \SP_k
            &=
                \lambda \inv \SP_{k-1} 
                -
                \lambda \inv
                \SP_{k-1} \phi_{k-1}
                \gamma_k \inv 
                \phi_{k-1}^\rmT \SP_{k-1},
    \end{align}
    where, for all $k \geq 0,$ 
    $\gamma_k \isdef \lambda  +  \phi_{k-1}^\rmT \SP_{k-1} \phi_{k-1},$ and  
    $\Theta_0 = 0,$
    $\SP_0 \isdef R_\Theta\inv. $
\end{proposition}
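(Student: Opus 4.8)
The plan is to recognize the claimed recursion as the standard recursive least-squares (RLS) update with exponential forgetting and a Tikhonov-type regularizer, and to derive it directly from the closed-form minimizer of $J_k$ followed by a rank-one matrix-inversion identity.

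First I would establish existence and uniqueness of $\Theta_k$: since $R_\Theta$ is symmetric positive definite and $\lambda \in (0,1]$, the map $\Theta \mapsto J_k(\Theta)$ is a strictly convex quadratic, so $\Theta_k$ is the unique solution of $\nabla_\Theta J_k(\Theta) = 0$. Computing the gradient termwise, using $\nabla_\Theta \| \xi_i - \Theta \phi_{i-1} \|_2^2 = -2(\xi_i - \Theta\phi_{i-1})\phi_{i-1}^\rmT$ and $\nabla_\Theta \tr(\Theta R_\Theta \Theta^\rmT) = 2\Theta R_\Theta$ (reading the regularizer in its dimensionally consistent form), the stationarity condition rearranges to the normal equation
\begin{align}
    \Theta_k \left( \sum_{i=0}^{k} \lambda^{k-i} \phi_{i-1}\phi_{i-1}^\rmT + \lambda^k R_\Theta \right)
        =
    \sum_{i=0}^{k} \lambda^{k-i} \xi_i \phi_{i-1}^\rmT . \nn
\end{align}

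Next I would introduce $\SP_k$ as the inverse of the bracketed matrix (invertible since it dominates $\lambda^k R_\Theta \succ 0$) together with the cross term $\Xi_k \isdef \sum_{i=0}^{k} \lambda^{k-i} \xi_i \phi_{i-1}^\rmT$, so that $\Theta_k = \Xi_k \SP_k$. Isolating the $i = k$ summand gives the elementary recursions $\SP_k^{-1} = \lambda \SP_{k-1}^{-1} + \phi_{k-1}\phi_{k-1}^\rmT$ and $\Xi_k = \lambda \Xi_{k-1} + \xi_k \phi_{k-1}^\rmT$; adopting the convention $\phi_{-1} = 0$ (equivalently, starting the sum at $i = 1$) makes the $k=0$ case collapse to $\SP_0 = R_\Theta^{-1}$ and $\Theta_0 = 0$, matching the claimed initialization. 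Substituting $\Xi_{k-1} = \Theta_{k-1}\SP_{k-1}^{-1}$ and $\lambda \SP_{k-1}^{-1} = \SP_k^{-1} - \phi_{k-1}\phi_{k-1}^\rmT$ into $\Theta_k = (\lambda\Xi_{k-1} + \xi_k\phi_{k-1}^\rmT)\SP_k$ makes the right-hand side telescope to $\Theta_{k-1} + (\xi_k - \Theta_{k-1}\phi_{k-1})\phi_{k-1}^\rmT \SP_k$, which is the asserted update for $\Theta_k$. Finally, applying the Sherman--Morrison identity to $\SP_k^{-1} = \lambda \SP_{k-1}^{-1} + \phi_{k-1}\phi_{k-1}^\rmT$ yields the stated rank-one downdate for $\SP_k$, in which $\gamma_k = \lambda + \phi_{k-1}^\rmT \SP_{k-1}\phi_{k-1}$ is exactly the associated Schur complement.

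I expect the only delicate parts to be clerical rather than conceptual: keeping the $\phi_{k-1}$-versus-$\phi_k$ indexing straight throughout, confirming that the $\phi_{-1}=0$ convention is precisely what reconciles $\SP_0 = R_\Theta^{-1}$ with the $k=0$ instance of $J_k$, and placing the transposes correctly in the matrix derivative of the regularizer. The one substantive ingredient is Sherman--Morrison, whose invertibility hypothesis is automatic here because an easy induction shows $\SP_{k-1} \succ 0$, hence $\gamma_k > 0$; this also retroactively justifies speaking of $\Theta_k$ as \emph{the} minimizer.
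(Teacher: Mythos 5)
Your derivation is correct and is the standard recursive-least-squares argument (normal equations from strict convexity, rank-one recursions for $\SP_k^{-1}$ and the cross term, then Sherman--Morrison); the paper itself gives no proof here, deferring entirely to Proposition V.2 of the cited DMAC reference, which establishes this same RLS-with-forgetting result, so your write-up simply supplies the details the paper omits. You also correctly repaired the two clerical defects in the statement of \eqref{eq:J_k_def} --- the summand should read $\|\xi_i - \Theta\phi_{i-1}\|_2^2$ rather than $\|\xi_k - \Theta\phi_{k-1}\|_2^2$, and the regularizer must be read as $\tr(\Theta R_\Theta \Theta^\rmT)$ for the dimensions to conform --- and your $\phi_{-1}=0$ convention is exactly what reconciles the initialization $\SP_0 = R_\Theta^{-1}$, $\Theta_0 = 0$ with the $k=0$ instance of the cost.
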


\begin{proof}
    See Proposition V.2 in \cite{oveissi2025modelfreedynamicmodeadaptive}.
\end{proof}

Note that the cost function \eqref{eq:J_k_def} is a matrix extension of the cost function typically considered in engineering applications \cite{goel2020recursive}.
As shown in \cite{Mareels1986,Mareels1988,goel2020recursive}, persistency of excitation is required to ensure that 1) the estimate converges and 2) the corresponding covariance matrix $\SP_k$ remains bounded. 
To ensure the persistency of excitation, in this paper, we introduce a zero-mean white noise in the control signal to promote persistency in the regressor $\phi_k$, as discussed in Section \ref{sec:controlUpdate}.

\subsection{Tracking Controller}
\label{sec:controlUpdate}
This subsection presents the algorithm to compute the control signal $u_k$ using the dynamics approximation computed in Section \ref{sec:DynApprox}.
To track the reference signal $r_k  \in \BBR$, the DMAC algorithm uses the fullstate feedback controller with integral action.
Note that the full state refers to the state $\xi_k$ and not the system state $x_k.$ 
In particular, the control law is 
\begin{align}
    u_k = K_{\xi,k} \xi_k + K_{q,k} q_k + v_k,
\end{align}
where
\begin{align}
    \xi_k 
        \isdef
            \matl V_{\rma, k} \\ P_{\rma, k} \matr \in \BBR^2, 
\end{align}
denotes the DMAC state vector.
Here, $V_{\rma, k}$ and $P_{\rma, k}$ represent the normalized average flow velocity and average pressure at the nozzle outlet, respectively.
The normalization is performed by dividing each state value by its corresponding reference value obtained under nominal heat flux conditions of $2 \times 10^6~\rm{W/m^2}$. That is,
\begin{align}
    V_{\rma, k} = \frac{V_{\text{out},k}}{V_{\text{ref}}}, \quad 
    P_{\rma, k} = \frac{P_{\text{out},k}}{P_{\text{ref}}},
\end{align}
where $V_{\text{ref}}$ and $P_{\text{ref}}$ denote the outlet velocity and pressure measured at the nominal heat flux condition. This normalization ensures that the regressor $\phi_k$ used in the dynamics approximation remains well-scaled and numerically stable across operating points.

The matrices $K_{\xi,k} \in \BBR^{l_u \times l_\xi } $ and $K_{q,k} \in \BBR^{l_u \times l_y}$ are the time-varying fullstate feedback gain and the integrator gain, computed using the technique shown in Appendix A of \cite{oveissi2025modelfreedynamicmodeadaptive} and $v_k \sim \SN(0,\sigma_v I_{l_u})$ is a zero-mean white noise signal added to the control to promote persistency in the regressor $\phi_k$ used in the dynamic mode approximation step.
Note that the integrator state $q_k$ satisfies
\begin{align}
    q_{k+1}
        =
            q_k + z_k,
\end{align}
where $z_k \isdef r_k - y_k$ is the output error.


The gain matrices $K_{\xi,k}$ and $K_{q,k}$ are computed using the well-known linear-quadratic-integral control \cite{young1972approach}, which requires the $A, B, $ and $C$ matrices of the system. 
The dynamics matrix $A$ and the input matrix $B$ are given by the dynamic mode approximation described in \ref{sec:DynApprox}.
The computation of the output matrix $C$ is described below.

In this work, the system output is modeled using a neural network.
The neural network is trained using simulation data from the SFRJ model described in Section \ref{sec:cfd_model}.
Specifically, the thrust $y \in \BBR$ is modeled as a nonlinear function of $\xi,$ that is, 
\begin{align}
    y = NN(\xi),  
    \label{eq:output_NN}
\end{align}
where 
and $NN : \BBR^2 \rightarrow \BBR$ is a feedforward neural network \cite{rozario2024matrix}.

In this work, the neural network is trained using MATLAB’s \href{https://www.mathworks.com/help/deeplearning/ref/feedforwardnet.html}{feedforwardnet} function.
The training data consists of two sets of high-fidelity simulation results, totaling $500$ samples.
Each sample consists of the pair $(V_{\rma}, P_{\rma})$ as inputs and the corresponding generated thrust $y$ as the output.
%
%
The neural network architecture used in this work, shown in Fig.~\ref{NN_diagram}, consists of two hidden layers, each with $10$ neurons and \href{https://www.mathworks.com/help/deeplearning/ref/tansig.html}{tansig} activation functions, followed by a linear output layer.
The network is trained to minimize the mean squared error (MSE) loss between the predicted and actual thrust using the Levenberg–Marquardt algorithm.
The dataset is randomly split into training (70\%), validation (15\%), and testing (15\%) sets.
Figure \ref{fig:NN_performance} shows the mean squared error (MSE) on the training, validation, and test sets over epochs. 
All three error curves decrease consistently, indicating proper learning without overfitting. 
    The final validation error closely matches the training and test errors, suggesting that the trained network generalizes well to unseen data.

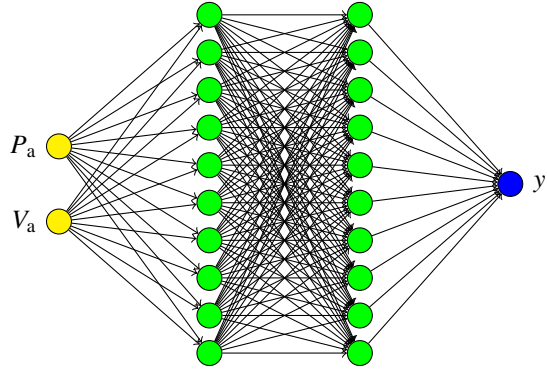
\begin{figure}[htbp]
\centering
\begin{tikzpicture}[scale=1, transform shape]

    \node[circle, draw, minimum size=0.3cm, fill=yellow] (I-1) at (0,0.5) {};
    \node[anchor=east] at (I-1.west) {$P_\mathrm{a}$};

    \node[circle, draw, minimum size=0.3cm, fill=yellow] (I-2) at (0,-0.5) {};
    \node[anchor=east] at (I-2.west) {$V_\mathrm{a}$};

    \foreach \i in {1,...,10} {
        \node[circle, draw, minimum size=0.3cm, fill=green] 
        (Hone-\i) at (2,2.75-\i*0.5) {};
    }


    \foreach \i in {1,...,10} {
        \node[circle, draw, minimum size=0.3cm, fill=green] 
        (Htwo-\i) at (4,2.75-\i*0.5) {};
    }


    \node[circle, draw, minimum size=0.3cm, fill=blue] (O-1) at (6,0) {};
    \node[anchor=west] at (O-1.east) {$y$};

    \foreach \i in {1,2} {
        \foreach \j in {1,...,10} {
            \draw[->] (I-\i) -- (Hone-\j);
        }
    }

    \foreach \i in {1,...,10} {
        \foreach \j in {1,...,10} {
            \draw[->] (Hone-\i) -- (Htwo-\j);
        }
    }

    \foreach \i in {1,...,10} {
        \draw[->] (Htwo-\i) -- (O-1);
    }

\end{tikzpicture}

\caption{Artificial neural network architecture used in this work, consisting of two hidden layers with 10 neurons each and \texttt{tansig} activation functions.}
\label{NN_diagram}
\end{figure}

\begin{figure}[htbp]
    \centering
    \includegraphics[width=0.7\columnwidth]{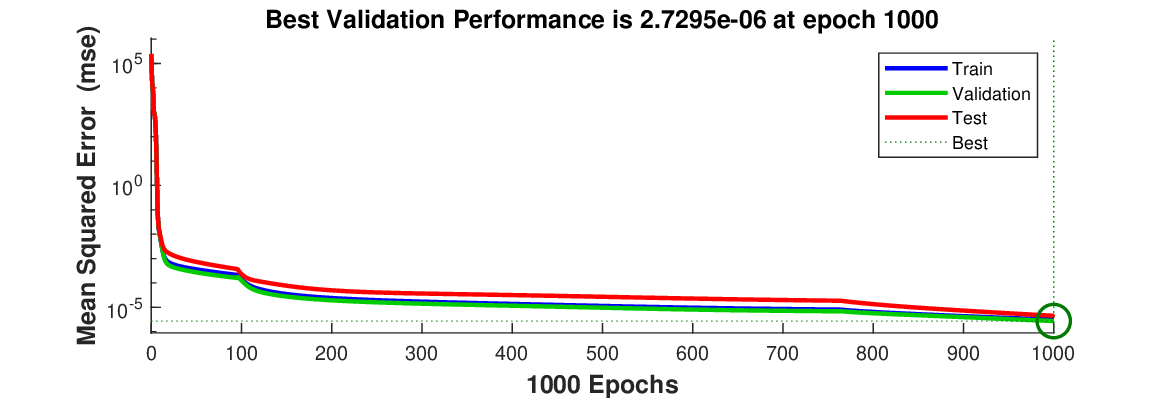}
    \caption{Neural network training performance plot showing the mean squared error (MSE) on the training, validation, and test sets over epochs. 
    } 
    \label{fig:NN_performance}
\end{figure}

It follows from \eqref{eq:output_NN} that the linearized output matrix is given by
\begin{align}
    C_k 
        =
            \dpder{NN(\xi)}{\xi} \Bigg \vert_{\xi = \xi_k},
\end{align}
where the Jacobian is numerically computed using the central difference scheme, that is, 
\begin{align}
    \frac{\partial NN(\xi)}{\partial \xi} \approx \frac{1}{2\varepsilon} 
        \begin{bmatrix}
            NN(\xi + \varepsilon e_1) - NN(\xi - \varepsilon e_1) \\
            NN(\xi + \varepsilon e_2) - NN(\xi - \varepsilon e_2)
        \end{bmatrix}.
\end{align}
Note that $e_1, e_2 \in \BBR^2$ are the standard basis vectors, and, in this work, we set $\varepsilon = 10^{-7}$.




\section{Simulation Results}
\label{sec:prelim_results}
This section presents numerical examples that demonstrate the application of the DMAC technique for regulating the thrust generated by the SFRJ. The DMAC hyperparameters were selected through a simple grid search to achieve a satisfactory transient response under nominal conditions.

The control signal $u_k$ is used to modulate the heat flux $w_k$ as

\begin{align}
    w_k
        =
            w_0 - K_w \times u_k,
    \label{eq:linear_map}
\end{align}
where $w_0$ is the nominal heat flux, $u_k$ is the adaptive control signal generated by the DMAC algorithm, and $K_w$ is the scaling factor.
%
The linear map \eqref{eq:linear_map} is chosen such that the magnitude of the adaptive control signal $u_k$ remains close to $\SO(1)$ to ensure the numerical stability of the DMAC algorithm.
In this work, the nominal heat flux $w_0 = 2 \times 10^6$ $\rm W/m^2$ and the scaling factor $K_w$ is set to $10^5.$

\subsection{Command Following}
First, we consider the problem of regulating the SFRJ thrust to a constant value. 
In particular, the SFRJ is commanded to generate a constant thrust value of $r = 1000 $ $\rmN$. 
%
Since $\xi_k \in \BBR^2$ and $u_k \in \BBR,$ it follows that $\Theta_k$ is a $2 \times 3$ matrix. 
In DMAC, we set $R_\Theta = 10^2 I_3$ and the forgetting factor $\lambda = 0.995$.
The LQR weights in the tracking controller are $R_1 = I_3$  and $R_2 = 1.$

Figure \ref{fig:DMAC_SFRJ_steps_250_lambda_0_995_R0_1e2_Q_1_R_1_sysdim_2_ref_1000_noise_1e_neg2_NNthrust_JacobianC_v3} shows the closed-loop response of the SFRJ with the DMAC algorithm updating the controller, where
a) shows the commanded thrust $r=1000$ $\rmN$ and the generated signal $y_k,$ b) shows the control signal $u_k,$ c) shows the absolute value of the tracking error $z_k \isdef y_k - r$ on a logarithmic scale, and d) shows the estimate matrix $\Theta_k$ computed by DMAC. Note that the output error approached zero.


\begin{figure}[htbp]
    \centering
    \includegraphics[width=0.7\columnwidth]{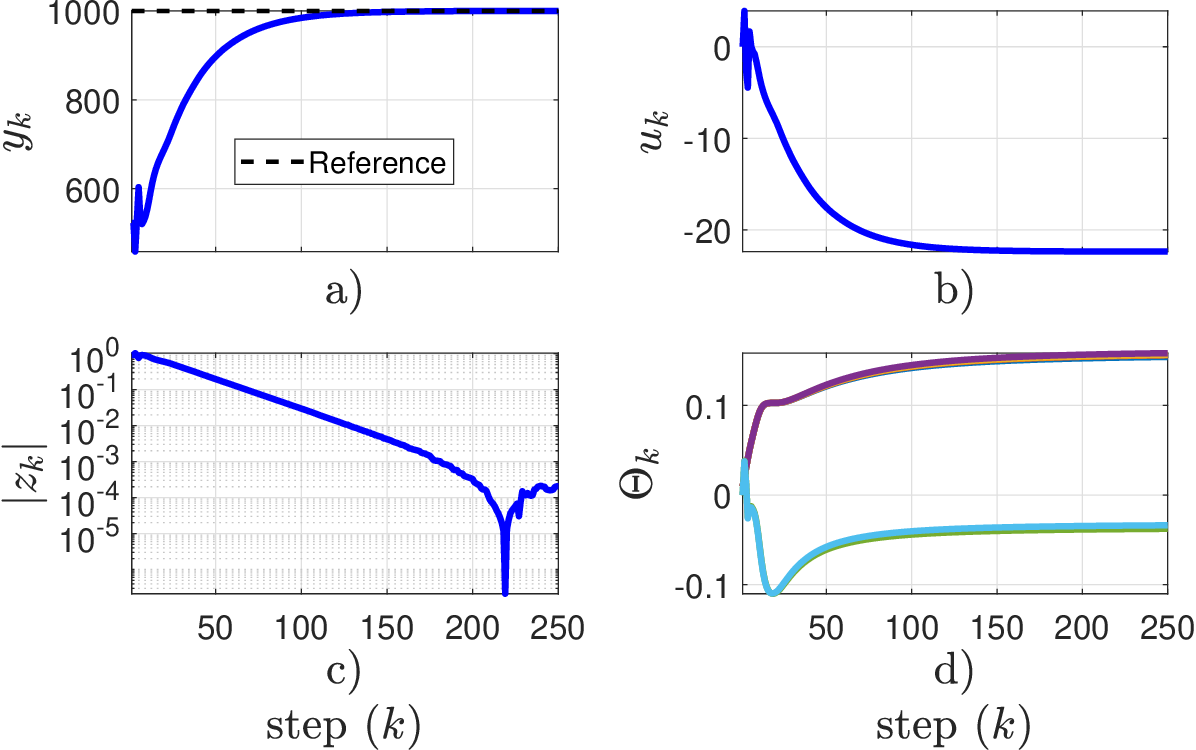}
    \caption{Closed-loop response of SFRJ with DMAC. a) shows the output $y_k$ and the reference signal $r,$ b) shows the control signal $u_k,$ and c) shows the absolute value of the tracking error $z_k$ on a logarithmic scale.} 
    \label{fig:DMAC_SFRJ_steps_250_lambda_0_995_R0_1e2_Q_1_R_1_sysdim_2_ref_1000_noise_1e_neg2_NNthrust_JacobianC_v3}
\end{figure}

Next, the SFRJ is commanded to follow a double-step command. 
Specifically, the thrust command is $r = 1000 $ $\rmN$ for $k \in (0,200)$ and $r = 1200$ $\rmN$ for $k \geq 200.$
Note that DMAC hyperparameters are kept the same as in the previous case.  
Figure \ref{fig:DMAC_SFRJ_steps_400_lambda_0_995_R0_1e2_Q_1_R_1_sysdim_2_ref_1000_1200_noise_1e_neg2_NNthrust_JacobianC_v2} shows the closed-loop response of the SFRJ with the DMAC algorithm updating the controller, where
a) shows the commanded thrust $r$ and the generated signal $y_k,$ b) shows the control signal $u_k,$ c) shows the absolute value of the tracking error $z_k \isdef y_k - r$ on a logarithmic scale, and d) shows the estimate matrix $\Theta_k$ computed by DMAC. Note that the output error approached zero.

\begin{figure}[htbp]
    \centering
    \includegraphics[width=0.7\columnwidth]{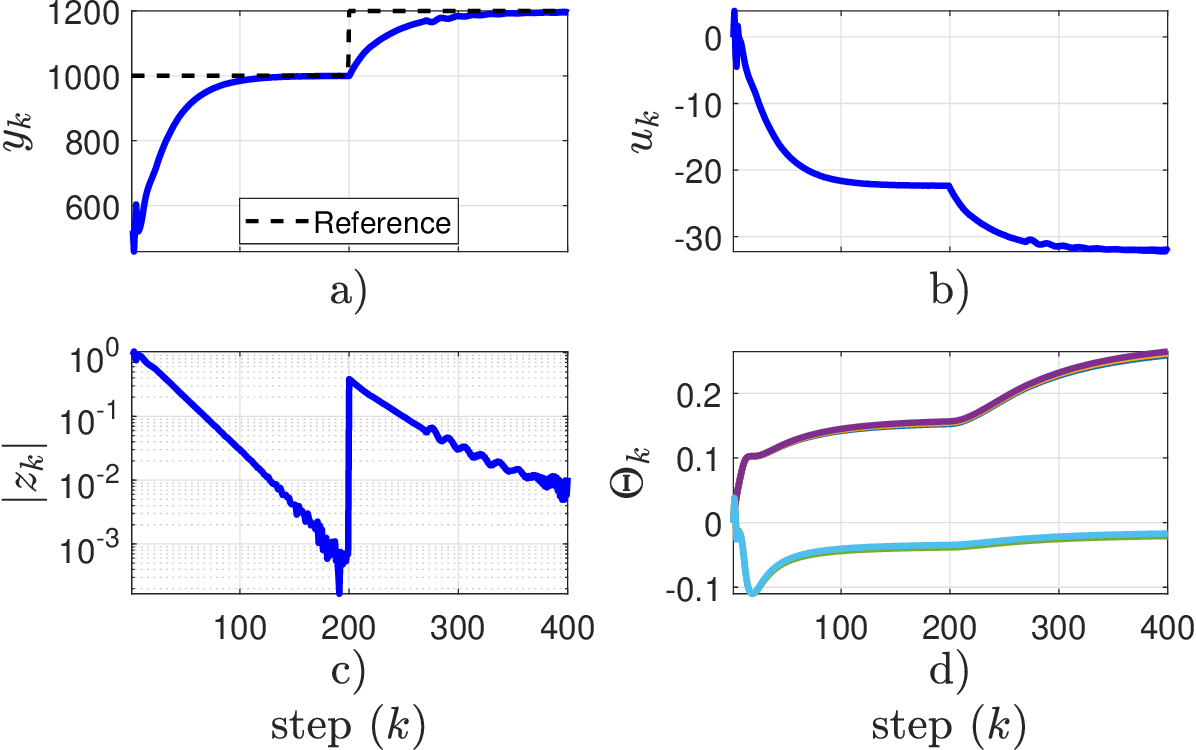}
    \caption{Closed-loop response of SFRJ with DMAC. a) shows the output $y_k$ and the reference signal $r,$ b) shows the control signal $u_k,$ and c) shows the absolute value of the tracking error $z_k$ on a logarithmic scale.} 
    \label{fig:DMAC_SFRJ_steps_400_lambda_0_995_R0_1e2_Q_1_R_1_sysdim_2_ref_1000_1200_noise_1e_neg2_NNthrust_JacobianC_v2}
\end{figure}

\subsection{Effect of Hyperparameters}
To investigate the robustness of the DMAC algorithm to its tuning hyperparameters $R_\Theta, \lambda, R_1$ and $R_2,$  we vary each of the hyperparameters systematically by keeping other hyperparameters at their nominal values.
Figure~\ref{fig:DMAC_SFRJ_HyperParam_Sensitivity}
shows the effect of the DMAC hyperparameters on the closed-loop response $y_k$ and the absolute tracking error $|z_k|$.
From top to bottom, the rows correspond to variations in $R_\Theta$, $\lambda$, $R_1$, and $R_2$, respectively. Note that, in each case, the hyperparameter is varied by a few orders of magnitude, suggesting that DMAC is robust to tuning parameters.

\begin{figure}[htbp]
    \centering
    \includegraphics[width=0.7\columnwidth]{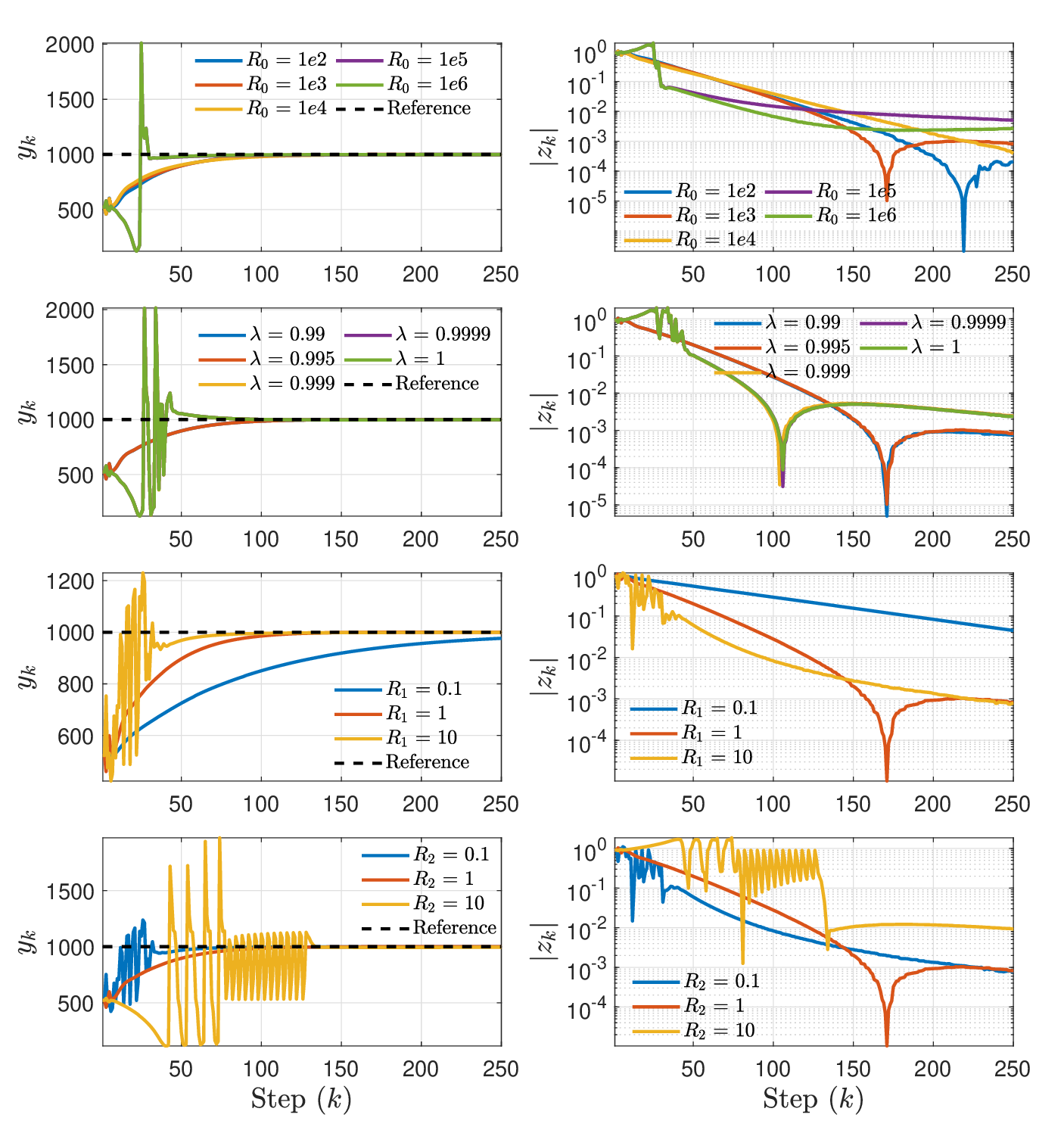}
    \caption{Effect of DMAC hyperparameters on the closed-loop performance.} 
    \label{fig:DMAC_SFRJ_HyperParam_Sensitivity}
\end{figure}

\section{Conclusion}
\label{sec:conclusions}

This paper considered the problem of regulating the thrust generated by a solid fuel ramjet (SFRJ) using limited in-situ measurements, without requiring an analytical model of the system. The simulation results presented in this work demonstrate that the dynamic mode adaptive control (DMAC) method is a viable and effective technique for thrust regulation in SFRJs. In particular, its minimal measurement requirements enhance its practicality for real-world implementation.

In addition, a systematic hyperparameter sensitivity analysis was conducted to evaluate the robustness of the DMAC framework with respect to its tuning parameters. The results indicate that the closed-loop performance and tracking error remain stable across wide variations in these parameters, suggesting that DMAC exhibits strong robustness to hyperparameter tuning. This property further supports its suitability for deployment in uncertain and dynamically changing propulsion environments.


\section{Acknowledgment}
This research was supported by the Office of Naval Research grant N00014-23-1-2468.

\bibliography{Paperbib}

\end{document}